\theoremstyle{plain}
\newtheorem{definition}{Definition}
\newtheorem{theorem}{Theorem}
\newtheorem{corollary}[theorem]{Corollary}
\newtheorem{lemma}[theorem]{Lemma}
\newcommand{\alE}{\mathcal{E}}		
\newcommand{\alD}{\mathcal{D}_t}	
\newcommand{\alJ}{\mathcal{J}} 		
\newcommand{\mM}{\mathcal{M}}		
\newcommand{\mN}{\mathcal{N}}		
\newcommand{\F}{\mathbb{F}}  		
\newcommand{\Fd}{\F\cdot}    		%
\newcommand{\pa}{|a|}				
\newcommand{\pb}{|b|}				
\newcommand{\pc}{|c|}				
\newcommand{\pd}{|d|}				
\newcommand{\redt}{\mathrm{Reg}\,\alD} 					
\newcommand{\zfmc}[2]{\mathcal{Z}^2_{#1}(\alD,\,#2)}	
\newcommand{\bfmc}[2]{\mathcal{B}^2_{#1}(\alD,\,#2)} 	
\newcommand{\hfmc}[2]{\mathcal{H}^2_{#1}(\alD,\,#2)} 	
\newcommand{\am}{^{(+)}}
\newcommand{\suMnm}{\mathcal{M}_{n|m}(\F)\am}
\newcommand{\osp}{\textnormal{osp}}
\newcommand{\josnm}{\alJ\osp_{n|2m}(\F)}
\newcommand{\we}{\widetilde{e}}         				
\newcommand{\wx}{\widetilde{x}}							
\newcommand{\wy}{\widetilde{y}}							
\newcommand{\fhi}[4]{h\left(\left(#1#2\right)#3,\,#4\right)+h(#1#2,\,#3)\cdot #4+\big(h(#1,\,#2)\cdot #3\big)\cdot #4} 				
\newcommand{\fhd}[4]{h(#1#2,\,#3#4)+h(#1,\,#2)\cdot (#3#4)+(#1#2)\cdot h(#3,\,#4)}													
\newcommand{\fhc}[4]{h((#1#2)#3,\,&#4) + h(#1#2,\,#3)\cdot #4 + (h(#1,\,#2)\cdot #3)\cdot#4\\
	+&(-1)^{|#2|(|#3|+|#4|)+|#3||#4|}\Big(h\big((#1#4)#3,\,#2\big)+h\big(#1#4,\,#3\big)\cdot#2+\big(h(#1,\,#4)\cdot#3\big)\cdot#2\Big)\\
	+&(-1)^{|#1|(|#2|+|#3|+|#4|)+|#3||#4|}\Big(h\big((#2#4)#3,\,#1\big)+h\big(#2#4,\,#3\big)\cdot#1+\big(h(#2,\,#4)\cdot#3\big)\cdot#1\Big)\\
	=h\big(#1#2,&\,#3#4\big)+h\big(#1,\,#2\big)\cdot(#3#4)+(#1#2)\cdot h\big(#3,\,#4\big)\\
	+&(-1)^{|#2||#3|}\Big(h\big(#1#3,\,#2#4\big)+h\big(#1,\,#3\big)\cdot(#2#4)+(#1#3)\cdot h\big(#2,\,#4\big)\Big)\\
	+&(-1)^{|#4|\big(|#2|+|#3|\big)}\Big(h\big(#1#4,\,#2#3\big)+h(#1,\,#4)\cdot(#2#3)+(#1#4)\cdot h\big(#2,\,#3\big)\Big)} 			
\begin{document}
	
\title[SCG of finite dimensional simple Jordan superalgebra $\mathcal{D}_{t}$]{Second cohomology group of the finite-dimensional simple Jordan superalgebra $\mathcal{D}_{t}$, $t\neq 0$}

\author[F. A.~G\'omez Gonz\'alez]{G\'omez Gonz\'alez, F. A. }
\address{Institute of Mathematics\\ University of Antioquia\\ Medell\'in\\ Colombia}
\email{faber.gomez@udea.edu.co}
\thanks{}

\author[J. A.~Ram\'irez Berm\'udez]{Ram\'irez Berm\'udez, J. A.}
\address{Institute of Mathematics\\ University of Antioquia\\ Medell\'in\\ Colombia}
\email{jalexander.ramirez@udea.edu.co}

\begin{abstract}
	The second cohomology group (SCG) of the Jordan superalgebra 
	$\mathcal{D}_{t}$, $t\neq 0$, is calculated by using the coefficients which appear in the regular superbimodule $\redt$. Contrary to the case of algebras, this group is nontrivial thanks to the non-splitting caused by the Wedderburn Decomposition Theorem \cite{Faber1}.
	
	First, to calculate the SCG of a Jordan superalgebra we use split-null extension of the Jordan superalgebra and the Jordan superalgebra representation. We prove conditions that satisfy the bilinear forms $h$ that determine the SCG in Jordan superalgebras. We use these to calculate the SCG for the Jordan superalgebra $\mathcal{D}_{t}$ , $t\neq 0$. 
	
	Finally, we prove that $\mathcal{H}^2(\mathcal{D}_{t}, \textrm{Reg}\mathcal{D}_{t})=0\oplus\F^2$, $t\neq 0$.
\end{abstract}

\keywords{Jordan superalgebra; second cohomology group; Wedderburn Principal Theorem; split null extension; regular superbimodule; decomposition theorem.}
\maketitle


\section{Introduction}
It is known that every separable, finite-dimensional Jordan algebra $\mathcal{J}$
has a trivial second cohomology group $\mathcal{H}^2(\mathcal{J},\mathcal{M})$, where $\mathcal{M}$ is a $\mathcal{J}$-bimodule, and the Wedderburn Principal Theorem (WPT)   holds for finite-dimensional Jordan algebras \cite{Jabcoson1968}. 

In 2018, the first author showed that the WPT does not hold for some finite Jordan superalgebras \cite{Faber1,FaberWPT,FaberRevo}, and therefore some finite Jordan superalgebras have nontrivial SCG. In particular, the Jordan superalgebra $\alD$ does not satisfy WPT  \cite{Faber1}. For this reason, we are interested in finding  the SCG for the finite Jordan superalgebra $\alD$. Some previous results about the SCG for alternative superalgebras were considered by Pisarenko and L\'opez-D\'iaz \cite{lopezdiaz,lopezdiaz1,pisarenko1993,pisarenkotesis,pisarenko1994}.

In this paper, Section \ref{preliminares} gives some preliminary results from the theory of Jordan superalgebras including those for the SCG in Jordan superalgebras. Then, Section \ref{SCG:h:functions}  establishes the conditions that satisfy the bilinear forms $h$ which define the SCG for a Jordan superalgebra. Finally, Section \ref{SCG:Dt} provides the calculation of SCG for the simple finite-dimensional Jordan superalgebra $\mathcal{D}_{t}$ using coefficients of $\textrm{Reg}\,\alD$. We prove that  $\hfmc{}{\redt}=0\oplus \F^2$, $t\neq 0$.



\section{Preliminaries}\label{preliminares}

Throughout the paper, all algebras are considered over an algebraically closed field  $\F$ of characteristic zero.

Note that $\alJ$ is said to be a $\textit{superalgebra}$,
if it is the direct sum $\alJ=\alJ_0\dot{+}\alJ_1$, 
where we denote the parity of $a$, $a\in \alJ_0 \dot\cup \alJ_1$ by $|a|=0,1$. Further, recall that a superalgebra $\alJ=\alJ_0\dot{+}\alJ_1$ is said to be a \textit{Jordan superalgebra}, if for every $a_i,a_j,a_k,a_l\in\alJ_0\dot\cup\alJ_{1}$ the superalgebra satisfies the superidentities
	\begin{equation}\label{sj1}
		a_ia_j=(-1)^{ij}a_ja_i,
	\end{equation}
	\begin{equation}\label{sj2}
	\begin{split}
	((a_ia_j)a_k)a_l+(-1)^{l(k+j)+jk}((a_ia_l)a_k)a_j+(-1)^{i(j+k+l)+kl}((a_ja_l)a_k)a_i\\
	=(a_ia_j)(a_ka_l)+(-1)^{l(k+j)}(a_ia_l)(a_ja_k)+(-1)^{kj}(a_ia_k)(a_ja_l).
	\end{split}
	\end{equation}

Throughout the paper, by $\dot{+}$ we denote the direct sum of vector spaces, by $+$ the sum  of vector spaces and by $\oplus$ a direct sum of superalgebras.

The classification of finite-dimensional simple Jordan superalgebras over an algebraically closed field of characteristic zero was given by Kac and Kantor \cite{Kac,Kantor1}.

A superbimodule $\mM=\mM_0\dot{+}\mM_1$ is called a \textit{Jordan $\alJ$-superbimodule},  
if the corresponding split-null extension $\mathcal{E}=\alJ\oplus\mM$ is a Jordan superalgebra. The multiplication in $\mathcal{E}$ is obtained from the multiplication 
in $\alJ$ and the action of $\alJ$ over $\mathcal{M}$, where $\mathcal{M}^2=0$. A \textit{regular $\alJ$-superbimodule}, denoted as $\mathrm{Reg}\,\alJ$, 
is defined  on the vector super-space $\alJ$ with an action coinciding with the multiplication in $\alJ$.

The classification of irreducible Jordan $\alD$-superbimodules over a finite dimensional, simple Jordan superalgebra $\alJ$ was given by Zelmanov and Martinez
\cite{MR2218992}.

Let $\alJ$ be a Jordan superalgebra and $\mM$, $\mN$ be the $\alJ$-superbimodules, then a linear mapping $\alpha: \mM\rightarrow\mN$ is called a \textit{homomorphism of superbimodule of degree $j$}, if the mapping is homogeneous of degree $j$, i.e.,  
$\alpha(\mathcal{M}_i)\subseteq\mathcal{N}_{i+j\, (\mathrm{mod}\, 2)}$.

Let $\alJ$ be a Jordan superalgebra and let $\mM$ be a Jordan $\alJ$-superbimodule.
Then a Jordan superalgebra $\alE$ is called an \textit{extension of $\alJ$ by $\mM$}, if there exists a short exact sequence of superalgebras 
		\begin{displaymath}\xymatrix@R=0.2mm{0\ar[r]& \mM\ar[r]^{\alpha}&\alE \ar[r]^{\beta}&\alJ \ar[r]&0}.
		\end{displaymath}

Two extensions $\alE$ and $\alE^{'}$ of $\alJ$ by $\mM$ one said to be equivalent, if there exists a homomorphism of superalgebras $\phi:\alE\rightarrow\alE^{'}$, such that the diagram
		\begin{displaymath}
		\xymatrix@R=0.3mm{
		 & & \mathcal{E}\ar[rd]^\beta\ar[dd]^\phi &  & \\
		0\ar[r]& \mM\ar[ru]^{\alpha}\ar[dr]_{\alpha^\prime}& &\alJ \ar[r]&0\\
		& & \mathcal{E}^\prime\ar[ru]_{\beta^\prime} &  &}
		\end{displaymath}
is commutative.

Let $\alJ$ be a Jordan superalgebra and $\mM$ a Jordan $\alJ$-superbimodule,
then $\alE$ is a split extension of $\alJ$ by $\mM$, if the short exact sequence \begin{displaymath}\xymatrix@R=0.2mm{0\ar[r]&\mM\ar[r]^{\alpha}&\alE\ar[r]^{\beta}&\alJ\ar[r]&0}\end{displaymath} admits a decomposition, i.e. there exists a homomorphism  of superalgebras
$\delta:\alJ\longrightarrow\mathcal{E}$ such that $\beta\circ\delta=1_{\alJ}$. For this existence, we observe that there exists a linear mapping $\delta:\alJ\longrightarrow\mathcal{E}$ such that 
$\alE=\alpha(\mM)\oplus\delta(\alJ)$, i.e. is a direct sum of vector spaces. Therefore it is necessary to find some conditions under which $\delta$ satisfies $\delta(ab)-\delta(a)\delta(b)=0$, i.e. $\delta$ is a homomorphism of Jordan superalgebras.

Note that if $\widetilde{a}\in\alJ$, then there exists $a\in\mathcal{E}$, such that $\beta(a)=\widetilde{a}$. Therefore, if $b\in\alE$ is another element such that $\beta(b)=\widetilde{a}$, then there exists $m\in\mM$, such that $\alpha(m)=b-a$.
Then, for  any $a,b\in\alJ$ there is a unique element $m\in\mM$, such that $m=\delta(ab)-\delta(a)\delta(b)$.

So, we define $h:\alJ\times\alJ\longrightarrow\mM$, where  
		\begin{equation}\label{h:definition}
		h(a,b)=\delta(a)\delta(b)-\delta(ab).
		\end{equation} 
  Note that $h$  is a bilinear form, since $\delta$ is linear.
  
Let $\delta^\prime: \alJ \longrightarrow \alE$ be another homomorphism of Jordan superalgebras, such that
 $ \beta \circ \delta^\prime=1_{\alJ}$ and 
$h^\prime:\alJ\times\alJ\longrightarrow \mM$, where 
$h^\prime(a,b)=\delta^\prime(ab)-\delta^\prime(a)\delta^\prime(b) $. 
$h$ and $h^\prime$ are said to be equivalents if there exists a linear mapping 
$\mu:\alJ\longrightarrow \mM$ such that $h(a,b)-h^\prime(a,b)=a\cdot \mu(b)+\mu(a)\cdot b-\mu(ab)$, where $\cdot$ denotes the action of $\alJ$ over $\mM$. In particular, observe that $h(a,b)$ is equivalent to zero, if there exists a linear mapping $\mu$ such that $h(a,b)= a\cdot \mu(b)+\mu(a)\cdot b-\mu(ab)$ (see \cite{Homological-algebra,Jabcoson1968} for details).

The superalgebra $\alJ$ can be identified with  $\delta(\alJ)$ and a superbimodule $\mM$ with $\alpha(\mM)$, thus
 $\alE=\alpha(\mM)\oplus\delta(\alJ)=\mM\oplus\alJ$, and therefore we can 
consider the nonzero multiplication $\ast$ in $\alE$, which is defined as 
$a\ast b=ab+h(a,b)$, $a\ast m=a\cdot m$, $m\ast a=m\cdot a$ 
for every $a,\,b\,\in \alJ$ and $m\in \mM$.
 
The bilinear forms $h$ defined  by \eqref{h:definition} are called \textit{cocycles}. 
The space determined by all cocycles  is said to be \textit{the cocycle space}. We denote this space by
	\begin{equation}\label{zn}
	\mathcal{Z}^2( \alJ,\mM):= \mathcal{Z}^2_{0}(\alJ,\mM)\dot{+}\mathcal{Z}^2_{1}(\alJ,\mM),
	\end{equation}
where 
$\mathcal{Z}^2_{k}(\alJ,\mM)=\left\{h\in \mathcal{Z}^2(\alJ,\mM)\, \mid h(\alJ_i,\alJ_j)\subseteq \mM_{i+j+k}\right\}$ 
for $i,\,j,\,k\in \mathbb{Z}_{2}$. 

The \textit{coboundery space} is the set of all cocycles that are equivalent to the bilinear form zero, and therefore we denote it by 
	\begin{equation}\label{bn}
	\mathcal{B}^2(\alJ,\mM):=\mathcal{B}^2_{0}(\alJ,\mM)\dot{+}\mathcal{B}^2_{1}(\alJ,\mM).
	\end{equation} 
These cocycles are called \textit{cobounderies}. Note that all cobounderies define an extension of 
$\alJ$ by $\mM$, which is isomorphic to the split-null extension $\alE=\alJ\oplus\mM$.
Moreover, the elements $h,\,h^\prime\in \mathcal{Z}^{2}(\alJ,\mM)$ define an equivalent extension, if $h-h^\prime\in \mathcal{B}^{2}(\alJ,\mM)$.

\begin{definition}\label{def:scg}
Let $\alJ$ be a Jordan superalgebra over  $\F$ 
and $\mM$ be a $\alJ$-superbimodule. The \textit{second cohomology group of  $\alJ$ with coefficients of $\mM$} (SCG) is defined as the quotient group of  
	\begin{equation}\label{2gc}
	\mathcal{H}^{2}(\alJ,\mM):=\mathcal{Z}^{2}(\alJ,\mM)/\mathcal{B}^{2}(\alJ,\mM)
	.\end{equation}
\end{definition}

Note that \eqref{2gc} is trivial when the WPT holds for a Jordan superalgebra $\alJ$ and a $\alJ$-superbimodule $\mM$. 

\section{SCG in Jordan superalgebras}\label{SCG:h:functions}

In Section \ref{preliminares}, we have defined the SCG for Jordan superalgebras by \eqref{2gc}. In this section, we deduce some identities that satisfy the bilinear forms $h$ which define the SCG for Jordan superalgebras. These conditions will be used  to calculate the SCG of a Jordan superalgebra which has a non-splitting extension. Namely, we prove the following statement.

\begin{theorem}\label{TeoAlex1}
Let $\alJ$ be a Jordan  superalgebra, let $\mM$ be a $\alJ$-superbimodule, and let
	\begin{equation*}
	\begin{aligned}
	F(a,b,c,d)&:=\fhi{a}{b}{c}{d},\\
	G(a,b,c,d)&:=\fhd{a}{b}{c}{d},
	\end{aligned}
	\end{equation*}
where $h$  are the cocycles that define the SCG for $\alJ$ with coefficients of $\mM$. 
	
Then the following formulas are true for the bilinear forms:
	\begin{equation}\label{hfsj0}
	h(a,b)=(-1)^{\pa\pb}h(b,a)
	\end{equation}
and
\begin{align}
	&F(a,b,c,d)+(-1)^{\pb(\pc+\pd)+\pc\pd}F(a,d,c,b)+(-1)^{\pa(\pb+\pc+\pd)+\pc\pd}F(b,d,c,a)\nonumber\\
	&\qquad=G(a,b,c,d)+(-1)^{\pb\pc}G(a,c,b,d)+(-1)^{\pd (\pc+\pd)}G(a,d,b,c)\label{hfsj1}
	\end{align}
for all $a,\,b,\,c$, $d\in\alJ_0\dot\cup\alJ_1$.
\end{theorem}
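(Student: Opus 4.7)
The plan is to exploit the fact that the cocycles $h$ arise precisely from the requirement that the twisted multiplication $\ast$ on $\mathcal{E} = \mathcal{J}\oplus\mathcal{M}$, defined by $a\ast b = ab + h(a,b)$ for $a,b\in\mathcal{J}$ together with $a\ast m = a\cdot m$ and $m\ast a = m\cdot a$ for $m\in\mathcal{M}$, endows $\mathcal{E}$ with the structure of a Jordan superalgebra. The superidentities \eqref{sj1} and \eqref{sj2}, applied in $\mathcal{E}$ to homogeneous elements $a,b,c,d\in\mathcal{J}$, must therefore hold; the $\mathcal{J}$-components vanish automatically because $\mathcal{J}$ itself is a Jordan superalgebra, while the $\mathcal{M}$-components yield the identities we want.

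\textbf{Step 1 (proof of \eqref{hfsj0}).} Applying supercommutativity \eqref{sj1} to the product $a\ast b$ in $\mathcal{E}$ gives
$ab + h(a,b) = (-1)^{|a||b|}\bigl(ba + h(b,a)\bigr)$.
Since $ab = (-1)^{|a||b|}ba$ in $\mathcal{J}$, the $\mathcal{J}$-parts cancel and we obtain $h(a,b) = (-1)^{|a||b|}h(b,a)$, which is \eqref{hfsj0}.

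\textbf{Step 2 (proof of \eqref{hfsj1}).} The plan is to expand both sides of the super-Jordan identity \eqref{sj2} with the operation $\ast$, for $a,b,c,d\in\mathcal{J}$. For the left-hand side I compute
$(a\ast b)\ast c = (ab)c + h(ab,c) + h(a,b)\cdot c$
and then
\begin{equation*}
((a\ast b)\ast c)\ast d = ((ab)c)d + h((ab)c,d) + h(ab,c)\cdot d + (h(a,b)\cdot c)\cdot d,
\end{equation*}
whose $\mathcal{M}$-component is exactly $F(a,b,c,d)$. For the right-hand side the relation $\mathcal{M}^2=0$ kills the cross term $h(a,b)\ast h(c,d)$, so
\begin{equation*}
(a\ast b)\ast(c\ast d) = (ab)(cd) + h(ab,cd) + h(a,b)\cdot(cd) + (ab)\cdot h(c,d),
\end{equation*}
whose $\mathcal{M}$-component is $G(a,b,c,d)$. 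Substituting these expansions into \eqref{sj2} and noting that the purely $\mathcal{J}$-valued terms satisfy \eqref{sj2} on the nose, the equality of $\mathcal{M}$-components gives precisely \eqref{hfsj1}.

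\textbf{Main obstacle.} The conceptual content is completely captured by the two steps above; the real difficulty is purely bookkeeping. The super-Jordan identity carries six parity signs of the form $(-1)^{|d|(|c|+|b|)+|b||c|}$ etc., and each sign must be reproduced exactly on the $F$- and $G$-sides of \eqref{hfsj1}. I would verify once and for all that the sign in front of the permuted term $F(a,d,c,b)$ matches the sign of $((a\ast d)\ast c)\ast b$ in \eqref{sj2}, and similarly for the remaining five terms; after that, the identity follows mechanically by reading off the $\mathcal{M}$-component. No further algebraic input beyond \eqref{sj1}, \eqref{sj2}, and $\mathcal{M}^2=0$ is required, which is why the conditions turn out to be exactly the super-analogues of the classical Hochschild $2$-cocycle conditions for Jordan algebras.
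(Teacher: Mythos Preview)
Your proposal is correct and follows essentially the same route as the paper: both arguments apply the Jordan superidentities \eqref{sj1} and \eqref{sj2} to the product $\ast$ on the extension $\mathcal{E}$, compute the expansions of $((a\ast b)\ast c)\ast d$ and $(a\ast b)\ast(c\ast d)$ exactly as you do, and then read off the $\mathcal{M}$-component after the $\mathcal{J}$-part cancels. The only cosmetic difference is that you explicitly invoke $\mathcal{M}^2=0$ to kill the $h(a,b)\ast h(c,d)$ term, which the paper leaves implicit.
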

\begin{proof}
Let $\mM$ be a $\alJ$-superbimodule of the Jordan superalgebra $\alJ=\alJ_0\dot{+}\alJ_1$, let $\mathcal{E}$ be an extension of $\alJ$ by $\mM$. Then 
it is clear that  $\alE$  is a Jordan superalgebra, and the equalities \eqref{sj1} and \eqref{sj2} hold for the multiplication $\ast$. It is easy to see that \eqref{hfsj0} is obtained from \eqref{sj1}. 
 
To prove \eqref{hfsj1}, observe that $a\ast b=ab+h(a,b)$ by the definition of $\ast$. Besides, 
	\begin{equation}\label{f1}
	((a\ast b)\ast c)\ast d =((ab)c)d+h((ab)c,d)+h(ab,c)\cdot d+(h(a,b)\cdot c)\cdot d
	\end{equation} 
and 
	\begin{equation}\label{f3}
	(a\ast b)\ast (c\ast d)=(ab)(cd)+h(ab,cd)+h(a,b)\cdot (cd)+(ab)\cdot h(c,d).
	\end{equation}
	
Similar to equations \eqref{f1} and \eqref{f3}, we write 
	\begin{equation}\label{f11}
	((a\ast d)\ast c)\ast b,\quad ((b\ast d)\ast c)\ast a,\quad (a\ast c)\ast (b\ast d)\quad\text{and}\quad(a\ast d)\ast (b\ast c).
	\end{equation}
	Further, substituting \eqref{f1}, \eqref{f3} and \eqref{f11} in \eqref{sj2} and using \eqref{hfsj0}, we obtain that  
		\begin{equation*}
		\begin{split}
		\fhc{a}{b}{c}{d},
		\end{split}
		\end{equation*}
		which proves  \eqref{hfsj1}.
\end{proof}


\section{SCG to the Jordan superalgebra $\mathcal{D}_{t}$}\label{SCG:Dt}

In this section, we prove the main theorem of the present paper, which gives the form of the SCG of the Jordan superalgebra $\mathcal{D}_{t}$ using coefficients in $\redt$. Let $\mathcal{D}_{t}=(\F\cdot e_1+\F\cdot e_2)\dot{+}(\F\cdot x+\F\cdot y)$, $t\neq 0$, with nonzero multiplication 
		\begin{equation*}
 		e^{2}_{i} =e_i,\quad e_{i}x=\frac{1}{2}x,\quad e_{i}y=\frac{1}{2}y\quad\text{for}\quad i=1,2\quad\text{and}\quad xy=e_1+te_2.
		\end{equation*}
By the results of \cite{consu-zel}, if $t\neq 0$, then $\mathcal{D}_{t}$ is a simple Jordan superalgebra over $\F$. Besides, in \cite{Faber1} it is proved that an analogue of WPT is valid for the Jordan superalgebras $\alD$, when some conditions are imposed on the irreducible Jordan $\alD$-superbimodules. Therefore, by the results of \cite{Faber1} the SCG of $\mathcal{D}_{t}$ with coefficients in $\redt$, $\mathcal{H}^{2}(\mathcal{D}_{t},\,\redt)$, is not trivial.

To prove the theorem, we have to calculate $\mathcal{H}^{2}(\alD,\redt)$ by means of Theorem \ref{TeoAlex1}. To this end, we suppose that $\redt=(\Fd\we_1+\Fd\we_2)\dotplus(\Fd\wx+\Fd\wy)$ is a regular $\alD$-superbimodule with the isomorphism $\varphi(e_i)=\we_i$, $\varphi(x)=\wx$,  $\varphi(y)=\wy$ $(i=1,2)$ and prove the following two lemmas. 

\begin{lemma}\label{L1}
	$\mathcal{H}^2_{0}(\alD,\redt)=0$, $t\neq 0$.
\end{lemma}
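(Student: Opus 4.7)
The plan is to show that every $h\in\zfmc{0}{\redt}$ is in fact a coboundary, which by Definition~\ref{def:scg} yields $\hfmc{0}{\redt}=0$. Concretely, I will construct a degree-zero linear map $\mu:\alD\to\redt$ whose coboundary $\partial\mu(a,b):=a\cdot\mu(b)+\mu(a)\cdot b-\mu(ab)$ agrees with $h$ on every pair of basis elements of $\alD$.

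First I parametrize a generic $h\in\zfmc{0}{\redt}$. The super-symmetry \eqref{hfsj0} already forces $h(x,x)=h(y,y)=0$ and $h(x,y)=-h(y,x)$; the degree-zero condition then forces $h(e_i,e_j)$ and $h(x,y)$ to lie in the even part $\F\we_1+\F\we_2$ of $\redt$, and $h(e_i,x),\,h(e_i,y)$ to lie in the odd part $\F\wx+\F\wy$. After expanding in coordinates this leaves a short list of scalar unknowns.

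Next I feed carefully chosen quadruples $(a,b,c,d)$ of basis elements of $\alD$ into the Jordan supercocycle identity \eqref{hfsj1} from Theorem~\ref{TeoAlex1}. Typical substitutions include $(e_i,e_i,e_j,e_j)$ to control $h$ on the idempotent block; $(e_i,e_j,x,y)$ and its relevant permutations, which bring in the product $xy=e_1+te_2$ and couple $h(e_i,e_j)$ and $h(x,y)$ to the mixed values $h(e_i,x),\,h(e_i,y)$; and $(e_i,x,e_j,y)$ for the remaining mixed triples. After expanding $F$ and $G$ using the multiplication table of $\alD$ and the regular action on $\redt$, each such substitution yields a linear relation among the unknown scalars.

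Finally, I write a generic degree-zero $\mu$, namely
\[
\mu(e_i)=p_i\we_1+q_i\we_2,\qquad \mu(x)=r\wx+s\wy,\qquad \mu(y)=u\wx+v\wy,
\]
compute $\partial\mu$ on the same basis pairs, and solve for $p_i,q_i,r,s,u,v$ so that $h=\partial\mu$. The hard part will be purely combinatorial: \eqref{hfsj1} carries many sign factors, so one must choose the substitutions so that the resulting linear system for the $\mu$-coefficients is solvable. I expect the hypothesis $t\neq 0$ to enter precisely at the stage where the $(e_i,e_j,x,y)$-type relations are inverted, because $xy=e_1+te_2$ then has a nonzero $e_2$-component that ties the parameters of $h(e_1,e_2)$ and $h(x,y)$ tightly enough to be absorbed by $\partial\mu$.
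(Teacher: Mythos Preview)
Your plan is correct and is essentially the paper's own proof: parametrize a generic even cocycle, force relations by plugging basis quadruples into \eqref{hfsj1}, then exhibit an even $\mu$ whose coboundary reproduces $h$. The paper uses the particular substitutions $(e_i,e_i,e_i,e_j)$, $(u,e_i,e_i,e_i)$ with $u\in\{x,y\}$, and $(x,e_1,y,e_2)$ rather than your $(e_i,e_i,e_j,e_j)$ and $(e_i,e_j,x,y)$, but these lead to the same linear system and your expectation about where $t\neq 0$ enters is accurate.
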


\begin{proof}
By \eqref{zn}, if $h$ is a bilinear form, then $h\in\zfmc{0}{\redt}$ which means that $h(e_i,e_j)$, $h(x,x)$, $h(y,y)$, $h(x,y)$, $h(y,x)\in (\redt)_0$ and $h(e_i,x)$, $h(x,e_i)$, $h(e_i,y)$, $h(y,e_i)\in(\redt)_1$ for $i, j = 1,2$. Therefore, by \eqref{hfsj0} we just have to consider the elements $h(e_i,e_i)$, $h(e_i,e_j)$, $h(e_i,x)$, $h(e_i,y)$, $h(x,x)$, $h(y,y)$  and $h(x,y)$ for $i, j = 1,2$. Thus, we write $h\in(\redt)_0$ as
$h(e_i,e_i)=\alpha_{i1}\widetilde{e}_1 +\alpha_{i2}\widetilde{e}_2$, $h(e_1,e_2)=\beta_1 \widetilde{e}_1 +\beta_2 \widetilde{e}_2$, $h(x,x)=\eta_1 \widetilde{e}_1 +\eta_2 \widetilde{e}_2$, $h(y,y)=\lambda_1\widetilde{e}_1 +\lambda_2\widetilde{e}_2$, $h(x,y)=\omega_1 \widetilde{e}_1 +\omega_2\widetilde{e}_2$ and  $h\in (\redt)_1$ as $h(e_i,x)=\gamma_{ix}\wx+\gamma_{iy}\wy$,		$h(e_i,y)=\theta_{ix}\wx+\theta_{iy}\wy$, where
$\alpha_{ij}$, $\beta_i$, $\eta_i$, $\lambda_i$, $\omega_i$,  $\gamma_{il}$,
$\theta_{il}\in\F$ for $i, j = 1,2$ and $l=x,y$.

Using \eqref{hfsj1}, we proceed to determine the constants $\alpha_{ij}$, $\beta_i$, $\eta_i$, $\lambda_i$, $\omega_i$,  $\gamma_{il}$,
$\theta_{il}\in\F$, for $i, j = 1,2$ and $l=x,y$.
Assuming that $u$ is the odd element of $\alD$ and substituting it in \eqref{hfsj0} we get $h(u,u)=(-1)^{|u||u|}h(u,u)$ which is equivalent to $h(u,u)=-h(u,u)$. Then, we conclude that $h(u,u)=0$. In particular, for $u=x$ we get $h(x,x)=0$. In a similar way, we obtain that also $h(y,y)=0$.

Now, replacing $a=b=c$ by $e_i$ and also $d$ by $e_j$ $(i\neq j)$ in \eqref{hfsj1}, we obtain 
		\begin{align*}
		&\fhi{e_{i}}{e_{i}}{e_{i}}{e_{j}}\\
		&\hskip5mm +2\big(\fhi{e_{i}}{e_{j}}{e_{i}}{e_{i}}\big)\\
		&=\fhd{e_{i}}{e_{i}}{e_{i}}{e_{j}}\\
		&\hskip5mm+2\big(\fhd{e_{i}}{e_{j}}{e_{i}}{e_{i}}\big).
		\end{align*}
By the multiplication in $\alD$ and the action of $\alD$ over $\redt$, we get
		\begin{equation}\label{eqn:01}
		h(e_i,e_j)+h(e_i,e_i)\cdot e_j=h(e_i,e_j)\cdot e_i .
		\end{equation}
Rewritting \eqref{eqn:01}, we conclude 
		\begin{equation}\label{h0:eq1}
		\beta_1\we_1+\beta_2\we_2+\alpha_{ij}\we_j=\beta_{i}\we_i.
		\end{equation}
Assuming that $i=1$ and $j=2$ in \eqref{h0:eq1} and using the linear independence of $\we_i$, $i=1,2$, we obtain that $\beta_2+\alpha_{12}=0$. In a similar way, taking $i=2$ and $j=1$ in \eqref{h0:eq1} we find that $\beta_{1}+\alpha_{21}=0$. 

Further, substituting $a$ by $u$ and $b=c=d$ by $e_i$ in \eqref{hfsj1}, for the odd element $u$ of $\alD$ we get
		\begin{align*}
		&2\left(\fhi{u}{e_i}{e_i}{e_i}\right)\\
		&\hskip5mm +\fhi{e_i}{e_i}{e_i}{u}\\
		&=2\left(\fhd{u}{e_i}{e_i}{e_i}\right)\\
		&\hskip5mm +\fhd{e_i}{e_i}{e_i}{u}.
		\end{align*}
Using the multiplication in $\alD$ and the action of $\alD$ over $\redt$, we obtain that the last equation is equivalent to 
		\begin{equation}\label{h0:eq2}
		2\left(h(u,e_i)\cdot e_i\right)\cdot e_i + \left(h(e_i,e_i)\cdot e_i\right)\cdot u = 2h(u,e_i)\cdot e_i + h(e_i,e_i)\cdot\left( e_i u\right)
		\end{equation}
Putting $u=x$ and $i=1$ in \eqref{h0:eq2}, by the linear independence of $\wx$ and $\wy$ we get 
		\begin{equation}\label{eqn:02}
		\gamma_{1y}=0\quad \text{and}\quad 2\gamma_{1x}=\alpha_{11}-\alpha_{12}.
		\end{equation}
Similarly, substituting $u=x$ and $i=2$ in \eqref{h0:eq2} we get 
		\begin{equation}\label{eqn:03}
		\gamma_{2y}=0\quad\text{and}\quad 2\gamma_{2x}=\alpha_{22}-\alpha_{21}.
		\end{equation}
Also, the substitutions $u=y$ and $i=1$ in \eqref{h0:eq2} give 
		\begin{equation}\label{eqn:04}
		\theta_{1x}=0\quad \text{and}\quad 2\theta_{1y}=\alpha_{11}-\alpha_{12},
		\end{equation}
while the substitutions $u=y$ and $i=2$ in \eqref{h0:eq2} imply 
		\begin{equation}\label{eqn:05}
		\theta_{2x}=0\quad \text{and} \quad 2\theta_{2y}=\alpha_{22}-\alpha_{21}.
		\end{equation}
By the equalities \eqref{eqn:02}, \eqref{eqn:03}, \eqref{eqn:04} and \eqref{eqn:05} we conclude that $\gamma_{ix}=\theta_{iy}$ and $\gamma_{iy}=\theta_{ix}=0$ for $i=1,2$. Therefore, $h(e_i,x)=\gamma_{ix}\wx$ and $h(e_i,y)=\gamma_{ix}\wy$ for $i=1,2$.

Substituting $a=x$,  $b=e_1$, $c=y$ and $d=e_2$ in \eqref{hfsj1}, we get 
\begin{align*}
		&\fhi{x}{e_1}{y}{e_2}\\
		&\hskip4mm +\fhi{x}{e_2}{y}{e_1}\\
		&\hskip8mm -\big(\fhi{e_1}{e_2}{y}{x}\big)\\
		&=\fhd{x}{e_1}{y}{e_2}\\
		&\hskip4mm +\fhd{x}{e_2}{y}{e_1}\\
		&\hskip8mm -\big(\fhd{e_1}{e_2}{y}{x}\big),
		\end{align*}
and simplifying this equality we obtain
		\begin{equation*}
		\begin{split}
		h(xy,e_2)+2(h(x,e_1)\cdot y)\cdot e_2+h(xy,e_1)+2(h(x,e_2)\cdot y)\cdot e_1-2(h(e_1,e_2)\cdot y)\cdot x\\
		=x\cdot(h(y,e_1)+h(y,e_2))+(h(x,e_1)+h(x,e_2))\cdot y+2h(e_1,e_2)\cdot(xy).
		\end{split}
		\end{equation*}
It is easy to see that the calculation of the left and right side of the latter equality based on the linear independence of $\we_1$ and $\we_2$ gives 
		\begin{equation}\label{eqn:06}
		2\gamma_{1x}=\beta_2 +\alpha_{11}\quad \text{and}\quad2\gamma_{2x}=\beta_1+ \alpha_{22}.
		\end{equation}
Similarly, considering all replacements of elements $\alD$ in $\eqref{hfsj1}$ we get formulas \eqref{eqn:02}, \eqref{eqn:03}, \eqref{eqn:04}, \eqref{eqn:05} and \eqref{eqn:06}.

Now, solving the linear equations given by formulas \eqref{h0:eq1} and \eqref{eqn:02}-\eqref{eqn:06}, we obtain that $\gamma_{ix}$, $\alpha_{ii}$,  $\alpha_{ij}$, $\beta_{j}=-\alpha_{ij}$ $(i\neq j)$ and $\omega_{i}$ $(i,j=1,2)$ are  nonzero constants.
	
Observe that, if $h\in \zfmc{0}{\textrm{Reg}\alD}$, then $h(x,x)=h(y,y)=0,$
		\begin{align*}
		h(e_1, e_1)&=\alpha_{11}\we_{1}+\alpha_{12}\we_2,\quad			&h(e_2,e_2)&=\alpha_{21}\we_{1}+\alpha_{22}\we_2,\\
		h(e_1, e_2)&=-\alpha_{21}\we_{1}-\alpha_{12}\we_2,\quad  		&h(x,y)&=\omega_1\we_{1}+\omega_2\we_2,\\
		h(e_1,x)&=\left(\frac{\alpha_{11}-\alpha_{12}}{2}\right)\wx,\qquad 		&h(e_1,y)&=\left(\frac{\alpha_{11}-\alpha_{12}}{2}\right)\wy,\\
		h(e_2,x)&=\left(\frac{\alpha_{22}-\alpha_{21}}{2}\right)\wx,\qquad 		&h(e_2,y)&=\left(\frac{\alpha_{22}-\alpha_{21}}{2}\right)\wy.
		\end{align*}
Consider that $h_i\in \zfmc{0}{\redt}$, $i=1,\hdots,6$, such that the nonzero images of the bilinear forms are 		
\begin{align*}
		h_1(e_1,e_1)&=\we_1, 			&h_1(e_1,x)&=\frac{\wx}{2},		&h_1(e_1,y)&=\frac{\wy}{2}, &&\\
		h_2(e_1,e_1)&=\we_2, 			&h_2(e_1,e_2)&=-\we_2,			&h_2(e_1,x)&=-\frac{\wx}{2}, &h_2(e_1,y)&=-\frac{\wy}{2},\\
		h_3(e_2,e_2)&=\we_1,			&h_3(e_1,e_2)&=-\we_1,			&h_3(e_2,x)&=-\frac{\wx}{2},&h_3(e_2,y)&=-\frac{\wy}{2},\\
		h_4(e_2,e_2)&=\we_2,			&h_4(e_2,x)&=\frac{\wx}{2},		&h_4(e_2,y)&=\frac{\wy}{2},&&\\
		h_5(x,y)&=\we_1,				&h_6(x,y)&=\we_2.				&& &&
		\end{align*}
These bilinear forms generate $\zfmc{0}{\textrm{Reg}\alD}$, for $i= 1,\hdots, 6 $. 

Now, let $h \in \bfmc{0}{\redt}$. Then for all $a,b\in\alD$, $h$ is of the form \begin{equation}\label{eqn_cobounderiesz0}
h(a,b)=-\mu(ab)+a\cdot\mu(b)+\mu(a)\cdot b,
\end{equation} where $\mu:\alD\rightarrow\redt$ is an even linear mapping. So, we write  $\mu(e_i)=\lambda_{i1}\we_1+\lambda_{i2}\we_2$, $\mu(x) =\lambda_{xx}\wx+\lambda_{xy}\wy$ and $\mu(y)=\lambda_{yx}\wx+\lambda_{yy}\wy$,
where $\lambda_{ij},\,\lambda_{lk}\in\F$, 
for $i, j=1,2$ and $l,k=x,y$. Now, considering $h(a,b)$ for all $a,b\in\alD$ in \eqref{eqn_cobounderiesz0} we obtain 
		\begin{equation}\label{eqn:07}
		\begin{split}
		h(e_i,e_i)&=-\mu(e_i^2)+e_{i}\cdot\mu(e_i)+\mu(e_i)\cdot e_i\\
		&=-\mu(e_i)+e_i\cdot\mu(e_i)+\mu(e_i)\cdot e_i\\
		&= -(\lambda_{i1}\we_1+\lambda_{i2}\we_2)+2\lambda_{ii}\we_i.
		\end{split}
		\end{equation}
Setting here $i=1$, we get $h(e_1,e_1)=\alpha_{11}\we_1+\alpha_{12}\we_2=\lambda_{11}\we_1-\lambda_{12}\we_2$. By the linear independence of $\we_1$ and $\we_2$ we have 
\begin{equation}\label{x}
\lambda_{11}=\alpha_{11}\quad\text{and}\quad\lambda_{12}=-\alpha_{12}.
\end{equation}
Similarly, taking $i=2$ in \eqref{eqn:07} we get $h(e_2,e_2)=\alpha_{21}\we_1+\alpha_{22}\we_2=-\lambda_{21}\we_1+\lambda_{22}\we_2$. Thus,
\begin{equation}\label{y}
\lambda_{21}=-\alpha_{21}\quad\text{and}\quad\lambda_{22}=\alpha_{22}.
\end{equation} Further, observe that \begin{equation*}
		\begin{split}
		h(x,y)&=-\mu(xy)+x\cdot \mu(y)+\mu(x)\cdot y\\
			&=-\mu(e_1)-t\mu(e_2)+\lambda_{yy}(\we_1+t\we_2)+\lambda_{xx}(\we_1+t\we_2)\\
			&=-\mu(e_1)-t\mu(e_2)+(\lambda_{xx}+\lambda_{yy})(\we_1+t\we_2),
		\end{split}
		\end{equation*}
and hence $h(x,y)=\omega_{1}\we_1+\omega_{2}\we_2 =(-\alpha_{11}+\alpha_{12}t)\we_1+(\alpha_{12}-\alpha_{22}t)\we_2+(\lambda_{xx}+\lambda_{yy})(\we_1+t\we_2)$. Therefore, by the linear independence of $\we_i$ $(i=1,2)$ we conclude that 
\begin{equation}\label{eqn:010}
\omega_1=-\alpha_{11}+\alpha_{12}t+\lambda_{xx}+\lambda_{yy}\quad \text{and}\quad
\omega_2=\alpha_{12}-\alpha_{22}t+(\lambda_{xx}+\lambda_{yy})t.
\end{equation}
In a similar way, considering $h(a,b)$ for all $a$, $b\in\alD$  in \eqref{eqn_cobounderiesz0} we obtain \eqref{x}, \eqref{y} and \eqref{eqn:010}. Besides, solving the linear equations given by \eqref{x}, \eqref{y} and \eqref{eqn:010} we get 
		\begin{align}
		\lambda_{11}&=\alpha_{11}, \quad\lambda_{22}=\alpha_{22},\quad &\lambda_{12}&=-\alpha_{12},\quad\lambda_{21}=-\alpha_{21},\nonumber\\
		\omega_1 &=\lambda_{xx}+\lambda_{yy}-\alpha_{11}+\alpha_{12}t, &\omega_2&=\alpha_{12}-\alpha_{22}t+(\lambda_{xx}+\lambda_{yy})t.\label{eqn:0011}
		\end{align}
Therefore, by $\eqref{eqn:0011}$ we conclude that if $h\in\zfmc{0}{\redt}$, then $h\in\bfmc{0}{\redt}$. Consequently, $\zfmc{1}{\redt}=\bfmc{1}{\redt}$. Thus
		\begin{equation*}
		\hfmc{0}{\redt}=\zfmc{0}{\redt}/\bfmc{0}{\redt}
		\end{equation*}
is isomorphic to $0$ for all $t\neq 0$.    
\end{proof}

\begin{lemma}\label{L2}
 $\mathcal{H}^2_{1}(\alD,\redt)=\F^2$, $t\neq 0$.
\end{lemma}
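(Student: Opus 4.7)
The plan is to mirror the strategy used in Lemma \ref{L1}, adapted to the odd-graded component. First I would parametrize an arbitrary $h\in\zfmc{1}{\redt}$ by its values on pairs from $\{e_1,e_2,x,y\}$: the parity condition $h(\alD_i,\alD_j)\subseteq(\redt)_{i+j+1}$ forces $h(e_i,e_j),\,h(x,y),\,h(y,x),\,h(x,x),\,h(y,y)\in\F\cdot\wx+\F\cdot\wy$ and $h(e_i,x),\,h(e_i,y)\in\F\cdot\we_1+\F\cdot\we_2$. Supersymmetry \eqref{hfsj0} immediately gives $h(x,x)=h(y,y)=0$, $h(y,x)=-h(x,y)$ and $h(e_j,e_i)=h(e_i,e_j)$, $h(x,e_i)=h(e_i,x)$, $h(y,e_i)=h(e_i,y)$, so everything reduces to a finite list of scalar unknowns (coefficients of $\wx,\wy$ in each $h(e_i,e_j)$ and $h(x,y)$, and of $\we_1,\we_2$ in each $h(e_i,x),h(e_i,y)$).

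Next I would feed the identity \eqref{hfsj1} with carefully chosen quadruples $(a,b,c,d)$ drawn from $\{e_1,e_2,x,y\}$, exactly as in Lemma \ref{L1}. The useful substitutions are $(e_i,e_i,e_i,e_j)$ for $i\neq j$, $(u,e_i,e_i,e_i)$ with $u\in\{x,y\}$, the mixed quadruples such as $(x,e_1,y,e_2)$, $(x,e_2,y,e_1)$, $(e_1,e_2,y,x)$, and also quadruples like $(x,y,e_i,y)$ and $(x,y,x,e_i)$ to extract constraints involving $\omega_x,\omega_y$. Using the multiplication table of $\alD$ and the action on $\redt$, each choice yields an element of $\redt$; equating the coefficients of $\we_1,\we_2,\wx,\wy$ produces a system of linear equations in the unknown scalars. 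Solving that system gives an explicit basis of $\zfmc{1}{\redt}$.

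Then I would compute $\bfmc{1}{\redt}$ by writing an arbitrary odd linear map $\mu:\alD\to\redt$ as $\mu(e_i)=\lambda_{i,x}\wx+\lambda_{i,y}\wy$, $\mu(x)=\lambda_{x,1}\we_1+\lambda_{x,2}\we_2$, $\mu(y)=\lambda_{y,1}\we_1+\lambda_{y,2}\we_2$, and evaluating
\begin{equation*}
h(a,b)=a\cdot\mu(b)+\mu(a)\cdot b-\mu(ab)
\end{equation*}
on the same basis pairs. This describes the coboundary subspace as the image of a linear map on the parameter space, and the rank of that map determines $\dim\bfmc{1}{\redt}$.

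Finally I would exhibit two explicit cocycles $h_7,h_8\in\zfmc{1}{\redt}$ (natural candidates are forms concentrated on $h(e_1,e_1)$ or $h(x,y)$ with values in $\wx,\wy$) whose classes span the quotient and are linearly independent modulo $\bfmc{1}{\redt}$, and check that every other solution of the cocycle system differs from a combination of $h_7,h_8$ by a coboundary of some $\mu$. The main obstacle I expect is the combinatorial bookkeeping in step two: the identity \eqref{hfsj1} has many redundant instances, so I must pick enough inequivalent quadruples to pin down every unknown while tracking the $t$-dependence introduced by $xy=e_1+te_2$, which distributes $t$ asymmetrically through the $\omega_x,\omega_y$ relations and only collapses to a clean two-dimensional cokernel after the correct linear combinations are taken.
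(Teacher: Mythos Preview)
Your plan is essentially the paper's own argument: parametrize an odd cocycle, kill $h(x,x)=h(y,y)$ via \eqref{hfsj0}, run \eqref{hfsj1} on selected quadruples, solve the linear system, and then compute coboundaries from an arbitrary odd $\mu$. Two of your anticipations are off, though your method will self-correct once you carry it out. First, the substitution $a=b=c=d=e_i$ (which you omitted) already forces $h(e_i,e_i)=0$, and then $(u,e_i,e_j,e_i)$ kills $h(e_1,e_2)$ as well, so there is no cocycle ``concentrated on $h(e_1,e_1)$''; the surviving two-parameter family lives on $h(e_i,x),h(e_i,y),h(x,y)$, with basis $h_1(e_1,x)=-h_1(e_2,x)=\we_1-\we_2,\ h_1(x,y)=2\wy$ and the companion with $x\leftrightarrow y$. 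Second, the coboundary computation is cleaner than you expect: the constraints $h(e_i,u)=h(u,e_i)$ and $h(u,u)=0$ force $\mu(e_i)=\mu(x)=\mu(y)=0$, so $\bfmc{1}{\redt}=0$ and the quotient is simply $\zfmc{1}{\redt}\cong\F^2$; no $t$-dependence appears anywhere in the odd sector, and there is no ``delicate cokernel'' to extract.
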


\begin{proof}
By \eqref{zn}, if $h$ is a bilinear form, then $h\in\zfmc{1}{\redt}$ which means that $h(e_i,e_j)$,  $h(x,x)$, $h(y,y)$, $h(x,y)$, $\,h(y,x)\in(\redt)_1$ and $h(e_i,x)$, $h(x,e_i)$, $h(e_i,y)$, $h(y,e_i)\in(\redt)_0$ for $i, j = 1,2$. Therefore, by \eqref{hfsj0}
we just have to consider elements $h(e_i,e_i)$, $h(e_i,e_j)$, $h(e_i,x)$, $h(e_i,y)$, $h(x,x)$, $h(y,y)$  and $h(x,y)$ for $i$, $j=1,2$. Thus, we  write bilinear forms $h\in(\redt)_1$ as $h(e_i,e_i)=\alpha_{ix}\wx +\alpha_{iy}\wy$, $h(e_1,e_2)=\beta_x \wx +\beta_y \wy$, $h(x,x)=\eta_x \wx +\eta_y \wy$, $h(y,y)=\lambda_x\wx +\lambda_y\wy$, $h(x,y)=\omega_x \wx +\omega_y\wy$ and $h\in(\redt)_0$ as 	$h(e_i,x)=\gamma_{i1}\we_1+\gamma_{i2}\we_2$, $h(e_i,y)=\theta_{i1}\widetilde{e}_1+\theta_{i2}\widetilde{e}_2$, where $\alpha_{ik}$, $\beta_k$, $\eta_k$, $\lambda_k$, $\omega_k$, $\gamma_{ij}$,
$\theta_{ij} \in\F$, for $i, j = 1,2$ and $k= x, y$. 

Using \eqref{hfsj1}, we proceed to determine $\alpha_{ik}$, $\beta_k$, $\eta_k$, $\lambda_k$, $\omega_k$, $\gamma_{ij}$, $\theta_{ij} \in\F$, for $i, j = 1,2$ and $k= x, y$. Assuming that $u$ is odd element of $\alD$ and substituting in \eqref{hfsj0} we have that $h(u,u)=(-1)^{|u||u|}h(u,u)$. Thus $h(u,u)=-h(u,u)$. Then, we conclude that $h(u,u)=0$. In particular, $h(x,x)=0$ and $h(y,y)=0$.

Now, substituting $a=b=c=d$ by $e_{i}$ in \eqref{hfsj1}, we  get $(h(e_i,e_i)\cdot e_i)\cdot e_i=e_i\cdot h(e_i,e_i)$ and then, we conclude that $\alpha_{ix}\wx+\alpha_{iy}\wy=0$. By the linear independence  of $\wx$ and $\wy$, we get $\alpha_{ix}=\alpha_{iy}=0$ for $i=1,2$. Therefore, $h(e_i,e_i)=0$ for $i=1,2$. Further, substituting $a$ by $u$, $b=d$ by $e_i$ and $c$ by $e_j$ $(i\ne j)$ and $u$ an odd element in \eqref{hfsj1}, we get
		\begin{align*}
		&2\big(\fhi{u}{e_i}{e_j}{e_i}\big)\\
		&\hskip5mm +\fhi{e_i}{e_i}{e_j}{u}\\
		& =2\big(\fhd{u}{e_i}{e_j}{e_i}\big)\\
		&\hskip5mm +\fhd{e_i}{e_i}{e_j}{u}.
		\end{align*}	
Applying $h(e_i,e_i)=0$ and the action of $\alD$ over $\redt$, we get
		\begin{equation}\label{eqn:11}
		h(e_i,e_j)\cdot u=u\cdot h(e_i,e_j).
		\end{equation}
Putting $u=x$ in \eqref{eqn:11}, we obtain $\beta_y=0$. Similarly, if $u=y$ in \eqref{eqn:11}, we find that $\beta_x=0$. Therefore, we conclude $h(e_1,e_2)=0$. Moreover, substituting  $a$ by $u$, $b=c$ by $e_i$ and $d$ by $e_j$ $(i\ne j)$ with $u$ an odd element in \eqref{hfsj1}, we get
		\begin{align*}
&\fhi{u}{e_i}{e_i}{e_j}\\
&\hskip4mm +\fhi{u}{e_j}{e_i}{e_i}\\
&\hskip8mm +\fhi{e_i}{e_j}{e_i}{u}\\
& =\fhd{u}{e_i}{e_i}{e_j}\\
&\hskip4mm +\fhd{u}{e_j}{e_i}{e_i}\\
&\hskip8mm +\fhd{e_i}{e_j}{e_i}{u},
\end{align*}
using the multiplication and the action, we obtain that the last equation is equivalent to
		\begin{equation}\label{iiuj1}
		h(u,e_j)+2h(u,e_i)\cdot e_j+ 2h(u,e_i)\cdot e_i= h(u,e_i).
		\end{equation}
Assuming $u=x$ in \eqref{iiuj1}, we obtain $h(x,e_j)+2h(x,e_i)\cdot e_j+ 2h(x,e_i)\cdot e_i= h(x,e_i)$. Further, substituting $i=1$ and $j=2$ in the last equality and using the linear independence of $\wx$ and $\wy$, then	\begin{equation}\label{eqn1:1}
		\gamma_{11}+\gamma_{21}=0\quad \text{and}\quad \gamma_{22}+\gamma_{12}=0.
		\end{equation} 
In the same way, putting $u=y$ in \eqref{iiuj1}, we obtain $h(y,e_j)+2h(y,e_i)\cdot e_j+ 2h(y,e_i)\cdot e_i= h(y,e_i)$. Assuming $i=1$ and $j=2$ in this equation, by the linear independence of $\wx$ and $\wy$ we find 
		\begin{equation}\label{eqn1:2}
		\theta_{11}+\theta_{21}=0\quad\text{and}\quad \theta_{22}+\theta_{12}=0.
		\end{equation}
Further, substituting $a$ by $x$, $b$ by $y$ and $c=d$  by $e_i$ in \eqref{hfsj1}, then	
		\begin{align}
        &4(h(x,e_i)\cdot e_i)\cdot y+h(x,y)-4(h(y,e_i)\cdot e_i)\cdot x\nonumber\\ 
        &\hskip4mm =4h(x,y)\cdot e_i+2x\cdot h(e_i,y)-2y\cdot h(e_i,x). \label{eqn:123}
		\end{align}
Assuming that $i=1$ in \eqref{eqn:123} we conclude
		\begin{equation}\label{eqn1:3}
		\omega_x=-\theta_{12}-3\theta_{11}\quad \text{and}\quad \omega_y=3\gamma_{11}+\gamma_{12}.
		\end{equation}
Similarly, putting $i=2$ in \eqref{eqn:123} by the linear independence of $\wx$ and $\wy$, we get
		\begin{equation}\label{eqn1:4}
		\omega_x=-\theta_{21}-3\theta_{22}\quad \text{and}\quad \omega_y=3\gamma_{22}+\gamma_{21}.
		\end{equation}
Similarly, considering all replacements of elements $\alD$ in $\eqref{hfsj1}$ we get formulas \eqref{eqn1:1}-\eqref{eqn1:4}.

Now, solving  the linear equation system  giving by \eqref{eqn1:1}-\eqref{eqn1:4} we get that nonzero constants are 
	$ \gamma_{11}=\gamma_{22}=-\gamma_{21}=-\gamma_{12}$,  
		$\theta_{11}=\theta_{22}=-\theta_{21}=-\theta_{12}$, $\omega_{y}=2\gamma_{11}$ and      $\omega_{x}=-2\theta_{11}$.
Observe that $h\in \zfmc{1}{\redt}$, then $h(e_i,e_j)=h(e_i,e_i)=h(x,x)=h(y,y)=0$ for $i,j=1,2$ and $h(e_1, x)=\gamma_{11}(\we_{1}-\we_2)$, $h(e_2,x)=\gamma_{11}(-\we_{1}+\we_2)$,
$h(e_1, y)=\theta_{11}(\we_{1}-\we_2)$, $h(e_2,y)=\theta_{11}(-\we_{1}+\we_2)$ and $h(x,y)=-2\theta_{11}\wx+2\gamma_{11}\wy$.

Consider that $h_i\in \zfmc{1}{\redt}$, $i\in\left\{1,2\right\}$ such that the nonzero images of the bilinear forms are
		\begin{align*}
		&h_1(e_1,x)= \we_{1}-\we_2=-h_1(e_2,x),&h_1(x,y)&=2\wy,\\
		&h_2(e_1,y)= \we_{1}-\we_2=-h_2(e_2,y),&h_2(x,y)&=-2\wx.
		\end{align*}
These bilinear forms generate $\zfmc{1}{\redt}$ for $i= 1, 2$. Now, let $h \in$$\bfmc{1}{\redt}$. If  also $h\in\zfmc{1}{\redt}$ then for all $a,b\in\alD$, 
\begin{equation}\label{eqn:b10}
    h(a,b)=-\mu(ab)+a\cdot\mu(b)+\mu(a)\cdot b
\end{equation} where $\mu:\alD\rightarrow\redt$ is an odd linear mapping. Let $\mu(e_i)=\lambda_{ix}\wx+\lambda_{iy}\wy$, $\mu(x)=\lambda_{x1}\we_1+\lambda_{x2}\we_2$ and $\mu(y)=\lambda_{y1}\we_1+\lambda_{y2}\we_2$, where  $\lambda_{ik},\, \lambda_{ki}\in\F$, for $i=1,2$ and $k=x,y$. Now, we calculate  $h(a,b)$ equivalent to bilinear for zero for all $a,b\in\alD$. Substituting  $a$ by $e_i$ and $b$ by $u$ an odd element  in \eqref{eqn:b10} we obtain
		\begin{align}
		h(e_i,u)&=-\mu(e_i u)+e_i\cdot\mu(u)+\mu(e_i)\cdot u\nonumber\\
				&=-\frac{\mu(u)}{2}+e_i\cdot\mu(u)+\mu(e_i)\cdot u. \label{eqn:b00}
		\end{align}
Similarly to \eqref{eqn:b00}, we get 
		\begin{equation}\label{eqn:b0}
h(u,e_i)=-\frac{\mu(u)}{2}+u\cdot\mu(e_i)+\mu(u)\cdot e_i.
\end{equation} By $h(e_i,u)=h(u,e_i)$ and equalities $\eqref{eqn:b00}$ and $\eqref{eqn:b0}$, it is easy to see that 
\begin{equation}\label{eqn:b01}
u\cdot \mu(e_i)=\mu(e_i)\cdot u.
\end{equation} Assuming $u=x$ in \eqref{eqn:b01} we conclude that $\lambda_{iy}=0$, $i=1,2$. Similarly, with $u=y$, we get  $\lambda_{ix}=0$, $i=1,2$. Therefore, $\mu(e_i)=0$, $i=1,2$. So, we write 		\begin{equation}\label{eqn:b11}
h(e_i,u)=-\frac{\mu(u)}{2}+e_i\cdot\mu(u)
\end{equation}
Let $i=1$ and $u=x$ in \eqref{eqn:b11}, we get $h(e_1,x)=-\frac{1}{2}\big(\lambda_{x1}\we_1+\lambda_{x2}\we_2\big)+\lambda_{x1}\we_1$,
which is equivalent to $\gamma_{11}(\we_1-\we_2)=-\frac{1}{2}\big(\lambda_{x1}\we_1+\lambda_{x2}\we_2\big)+\lambda_{x1}\we_1$.
By the linear independence of $\we_1$ and $\we_2$, we conclude 
$2\gamma_{11}=\lambda_{x1}$ and $2\gamma_{11}=\lambda_{x2}$. Similarly, putting $i=2$ and $u=x$ in \eqref{eqn:b11}, we get	$2\gamma_{11}=\lambda_{x1}$ and $2\gamma_{11}=\lambda_{x2}$. Analogously with $u=y$ in \eqref{eqn:b11}, we  obtain
		$2\theta_{11}=\lambda_{y1}$ and $2\theta_{11}=\lambda_{y2}$. Therefore,
		\begin{equation}\label{eqn:2}
		\lambda_{x1}=\lambda_{x2}\quad \text{and}\quad \lambda_{y1}=\lambda_{y2}.
		\end{equation} 
Further, let $u$ be an odd element in $\alD$, susbtituting $a=b$ by $u$ in \eqref{eqn:b01} we obtain
		\begin{align}
		h(u,u)	&=-\mu(u^2)+u\cdot\mu(u)+u\cdot\mu(u)\nonumber\\
				&=u\cdot \mu(u)+\mu(u)\cdot u.\label{eqn:b3}
		\end{align}
Replacing $u=x$ in \eqref{eqn:b3}, we get $(\lambda_{x1}+\lambda_{x2})\wx=0$. Then by \eqref{eqn:2} in this equation we conclude $\lambda_{xi}=0$, $i=1,2$. Therefore $\mu(x)=0$. Analogously with $u=y$ in \eqref{eqn:b3} by \eqref{eqn:2}, we obtain $\mu(y)=0$. 

In conclusion,  $\mathcal{B}_{1}^{2}\big(\alD,\redt\big)=0$. Consequently,  	
	\begin{equation*}
	\mathcal{H}_{1}^{2}\big(\alD,\redt)=\zfmc{1}{\redt}/\mathcal{B}_{1}^{2}\big(\alD,\redt\big)
	\end{equation*} is isomorphic to $\F^2$.
\end{proof}

Now, we proof the main result of this paper.
\begin{theorem}\label{Main:theorem}
	Let $\alD$ be the Jordan superalgebra, $t\neq 0$. Then 
	$$\mathcal{H}^2(\alD,\redt)= 0\dot{+}\F^2.$$
\end{theorem}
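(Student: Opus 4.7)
The plan is to observe that Theorem \ref{Main:theorem} is essentially a bookkeeping assembly of the two lemmas just proved, combined with the $\mathbb{Z}_2$-grading built into the definitions \eqref{zn}, \eqref{bn} and \eqref{2gc}. The only genuine content beyond Lemmas \ref{L1} and \ref{L2} is the verification that the grading on cocycles descends to the quotient, so that $\mathcal{H}^2(\alD,\redt)$ really does split as the direct sum of its even and odd parts.

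More concretely, first I would recall from \eqref{zn} that $\mathcal{Z}^2(\alD,\redt)=\mathcal{Z}^2_0(\alD,\redt)\dotplus\mathcal{Z}^2_1(\alD,\redt)$ according to whether the bilinear form $h$ shifts parity by $0$ or $1$, and from \eqref{bn} the analogous decomposition $\mathcal{B}^2(\alD,\redt)=\mathcal{B}^2_0(\alD,\redt)\dotplus\mathcal{B}^2_1(\alD,\redt)$, which follows because the cobounding map $\mu\colon\alD\to\redt$ in \eqref{eqn_cobounderiesz0} can itself be split into its even and odd components. Then I would note that $\mathcal{B}^2_k(\alD,\redt)\subseteq\mathcal{Z}^2_k(\alD,\redt)$ for $k=0,1$, since a coboundary produced by a homogeneous $\mu$ of degree $k$ is a bilinear form of degree $k$. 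Consequently the quotient in Definition \ref{def:scg} decomposes as
\begin{equation*}
\mathcal{H}^2(\alD,\redt)=\mathcal{H}^2_0(\alD,\redt)\dotplus\mathcal{H}^2_1(\alD,\redt).
\end{equation*}

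Next, I would plug in the two lemmas: Lemma \ref{L1} gives $\mathcal{H}^2_0(\alD,\redt)=0$, while Lemma \ref{L2} gives $\mathcal{H}^2_1(\alD,\redt)\cong\F^2$, both for $t\neq 0$. Substituting these into the displayed decomposition yields $\mathcal{H}^2(\alD,\redt)=0\dotplus\F^2$, which is the desired equality.

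I do not expect any real obstacle here; the step that deserves the most care is simply stating cleanly why the decomposition $\mathcal{Z}^2=\mathcal{Z}^2_0\dotplus\mathcal{Z}^2_1$ passes to the quotient, i.e.\ why a homogeneous cocycle that is a coboundary is already a coboundary of a homogeneous $\mu$ of the same parity. This is immediate from the linearity of the map $\mu\mapsto(a,b)\mapsto a\cdot\mu(b)+\mu(a)\cdot b-\mu(ab)$ and the $\mathbb{Z}_2$-grading of $\redt$, but it is the only conceptual ingredient in the proof beyond directly citing Lemmas \ref{L1} and \ref{L2}.
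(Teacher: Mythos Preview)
Your proposal is correct and follows essentially the same approach as the paper, which simply cites Lemmas~\ref{L1} and~\ref{L2} together with Definition~\ref{def:scg}. Your additional explanation of why the $\mathbb{Z}_2$-grading on cocycles and coboundaries descends to the quotient is a reasonable elaboration, but the paper treats this as already encoded in \eqref{zn}, \eqref{bn}, and \eqref{2gc} and does not spell it out.
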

\begin{proof}
	The proof holds by Lemmas \ref{L1}, \ref{L2} and Definition \ref{def:scg}.
\end{proof}

\begin{corollary}
	Let $\mathcal{M}_{1\mid 1}(\F)^{(+)}$ Jordan superalgebra \cite{FaberWPT,consu-zel}, then
	\begin{equation*}
	\mathcal{H}^2\big(\mathcal{M}_{1\mid 1}(\F)^{(+)},\,\mathrm{Reg}\,\mathcal{M}_{1\mid 1}(\F)^{(+)}\big)= 0\dot{+}\F^2.
	\end{equation*}
\end{corollary}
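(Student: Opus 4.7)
The plan is to reduce the corollary to Theorem \ref{Main:theorem} by exhibiting an isomorphism of Jordan superalgebras $\mathcal{M}_{1|1}(\F)\am \cong \mathcal{D}_t$ for an appropriate choice of $t\neq 0$, and then transporting the regular superbimodule along this isomorphism. The second cohomology $\mathcal{H}^2(\alJ,\mM)$ is a functorial invariant of the pair $(\alJ,\mM)$: any isomorphism of Jordan superalgebras $\varphi\colon\alJ\xrightarrow{\sim}\alJ'$ together with a compatible isomorphism of superbimodules $\mM\xrightarrow{\sim}\mM'$ induces an isomorphism on cocycle and coboundary spaces (by pulling back bilinear forms $h$ and linear maps $\mu$), hence on the quotient.

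First I would recall the known classification result (cited in \cite{FaberWPT,consu-zel}) identifying the matrix superalgebra $\mathcal{M}_{1|1}(\F)\am$, equipped with the super-Jordan product $a\cdot b=\tfrac{1}{2}(ab+(-1)^{|a||b|}ba)$, with a member of the family $\alD$. Concretely, writing $E_{ij}$ for matrix units, the even part $\F\cdot E_{11}\dotplus\F\cdot E_{22}$ supplies two orthogonal idempotents summing to the identity, while the odd part $\F\cdot E_{12}\dotplus\F\cdot E_{21}$ satisfies $E_{12}\cdot E_{21}=\tfrac{1}{2}(E_{11}+E_{22})$. Matching this against the defining relations $e_i^2=e_i$, $e_ix=\tfrac{1}{2}x$, $e_iy=\tfrac{1}{2}y$, $xy=e_1+te_2$ of $\alD$ pins down the parameter; the correspondence $e_i\mapsto E_{ii}$, $x\mapsto E_{12}$, $y\mapsto E_{21}$ (up to normalization) yields the isomorphism $\mathcal{M}_{1|1}(\F)\am\cong \mathcal{D}_1$ (and more generally any $\mathcal{D}_t$ with $t\neq 0$ arises after rescaling the odd generators in the general linear family).

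Once the isomorphism $\varphi\colon\mathcal{M}_{1|1}(\F)\am\xrightarrow{\sim}\mathcal{D}_1$ is fixed, the regular superbimodules are transported automatically: $\mathrm{Reg}\,\mathcal{M}_{1|1}(\F)\am$ is simply the underlying super-vector space of $\mathcal{M}_{1|1}(\F)\am$ acted on by left/right Jordan multiplication, and $\varphi$ intertwines these actions with those of $\mathcal{D}_1$ on $\redt$. Hence the induced map $\varphi^{*}$ on cochains carries $\mathcal{Z}^2\bigl(\mathcal{D}_1,\redt\bigr)$ bijectively onto $\mathcal{Z}^2\bigl(\mathcal{M}_{1|1}(\F)\am,\mathrm{Reg}\,\mathcal{M}_{1|1}(\F)\am\bigr)$ and restricts to a bijection between the corresponding coboundary subspaces, respecting the $\mathbb{Z}_2$-grading in \eqref{zn} and \eqref{bn}.

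The corollary then follows by applying Theorem \ref{Main:theorem} with $t=1$, obtaining $\mathcal{H}^2\bigl(\mathcal{M}_{1|1}(\F)\am,\mathrm{Reg}\,\mathcal{M}_{1|1}(\F)\am\bigr)\cong \mathcal{H}^2(\mathcal{D}_1,\mathrm{Reg}\,\mathcal{D}_1)=0\dotplus\F^2$. The main obstacle is the first step: one must verify the isomorphism $\mathcal{M}_{1|1}(\F)\am\cong\mathcal{D}_t$ carefully, including the identification of the correct parameter $t$ and the compatibility of the super-grading, since Kac's classification lists $\mathcal{D}_t$ and $\mathcal{M}_{1|1}(\F)\am$ as a priori distinct items of the classification. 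This compatibility is exactly what \cite{FaberWPT,consu-zel} provide, so once those references are invoked the remainder of the argument is purely formal functoriality.
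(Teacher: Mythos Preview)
Your approach is exactly the paper's: invoke the isomorphism between $\mathcal{M}_{1|1}(\F)\am$ and a member of the $\alD$ family, then apply Theorem~\ref{Main:theorem}. The only issue is a sign slip in your sketch of the isomorphism. For odd elements the super-Jordan product is $a\cdot b=\tfrac{1}{2}(ab+(-1)^{|a||b|}ba)$, so
\[
E_{12}\cdot E_{21}=\tfrac{1}{2}\bigl(E_{12}E_{21}-E_{21}E_{12}\bigr)=\tfrac{1}{2}(E_{11}-E_{22}),
\]
not $\tfrac{1}{2}(E_{11}+E_{22})$. After rescaling the odd generators one lands on $xy=e_1-e_2$, i.e.\ $t=-1$; the paper accordingly cites the isomorphism $\mathcal{D}_{-1}\cong\mathcal{M}_{1|1}(\F)\am$. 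Your parenthetical remark that rescaling produces \emph{any} $\alD$ with $t\neq 0$ is also incorrect, since the $\alD$ for distinct parameters are generally non-isomorphic. None of this affects the conclusion, because Theorem~\ref{Main:theorem} holds for every $t\neq 0$, but the parameter you should quote is $t=-1$.
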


\begin{proof}
	The proof follows from Theorem \ref{Main:theorem} and the isomorphism $\mathcal{D}_{-1}\cong \mathcal{M}_{1\mid 1}(\F)^{(+)}$.
\end{proof}


\bibliographystyle{amsplain}

\end{document}